\theoremstyle{theorem}
\newtheorem{theorem}{Theorem}
\theoremstyle{definition}
\begin{document}

\title{Homophonic Quotients of Linguistic Free Groups: German, Korean, and Turkish}
\markright{Homophones in German, Korean, and Turkish}
\author{Herbert Gangl \and Gizem Karaali \and Woohyung Lee}
\date{}
\maketitle

\begin{abstract}
In 1993, the homophonic quotient groups for French and English (the quotient of the free group generated by the French (respectively English) alphabet determined by relations representing standard pronunciation rules) were explicitly characterized \cite{QHGL}. In this paper we apply the same methodology to three different language systems:\ German, Korean, and Turkish. We argue that our results point to some interesting differences between these three languages (or at least their current script systems). 
\end{abstract}

\section{Introduction.}

In 1993, the homophonic quotient groups for French and English (the quotient of the free group generated by the French (respectively the English) alphabet determined by relations representing standard pronunciation rules) were explicitly characterized \cite{QHGL}.
Some references mention an analogous characterization for Japanese, but that result does not seem to be easily accessible. 

In this paper we apply the same methodology to three different language systems:\ German, Korean, and Turkish. The analysis for German was circulated in unpublished form for a while; the Korean and the Turkish analyses are new. As we suggest in the final section of this paper, our results may point to some interesting differences between these three languages (or at least their current script systems). 

The paper is organized in a straightforward manner, with each numbered section presenting the analysis for one language. In particular Section \ref{S:German} presents our results for 
German, while Section \ref{S:Korean} presents our results for 
Korean, and Section \ref{S:Turkish} presents our results for
Turkish. A final section brings together these analyses and offers some thoughts on what we might gain from this comparative study. 

\section{German}
\label{S:German}

In their phonetically calibrated paper \cite{QHGL}, Mestre, Schoof, Washington, and Zagier have shown that the homophonic quotient of the free group on the 26 letters of the alphabet is trivial for both the French and the English language. As already foreshadowed in that paper, we obtain the same answer for the German language. 

\smallskip
Let $G$ be the quotient of the free group on 26 letters $a,\,b,\,c,\,\dots,z$ by the relations
$A=B$ provided there are words $A$ and $B$ in the German language whose pronunciations agree. 

\medskip
We justify the term ``agree'' by invoking standard dictionaries like \cite{Duden1} or \cite{Duden2}, whose name ``Duden'' has become synonymous with the official norm, as well as its online version \url{http://www.duden.de/suchen/dudenonline}, accessed last on December 18, 2016. Alternatively, for most of the pairs of words below, we can use an automatic phonetic converter such as the one at \url{http://familientagebuch.de/rainer/2007/38.html#4}, accessed last on December 18, 2016.

\begin{theorem} The group $\,G\,$ is trivial.
\end{theorem}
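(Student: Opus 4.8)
The plan is to show that every one of the 26 generators $a,b,\dots ,z$ equals $1$ in $G$; since $G$ is a quotient of the free group on these letters, that is all we need. The engine is free cancellation: each pair of German homophones $A,B$ gives a relation $A=B$ in $G$, and after deleting a common prefix and suffix we are left with something shorter. Two cases do the real work. If the homophones differ by a single inserted or deleted letter, say $A=PxQ$ and $B=PQ$ as strings, then cancelling $P$ and $Q$ forces $x=1$; note that a doubled letter (as in $P\,xx\,Q$ versus $P\,x\,Q$) is just a special case of this. If instead they differ by substituting one letter for another in a single slot, we obtain a relation $x=y$, which collapses to $x=1$ as soon as one of $x,y$ is already known to be trivial. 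The proof is therefore a cascade: kill a first batch of letters outright, then recycle them to simplify longer relations and isolate the rest.

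First I would clear out the silent $h$ and the ``doublable'' letters, each by a single-letter pair. The \emph{Dehnungs-h} yields \emph{Wal}/\emph{Wahl} (whale/election), so $h=1$. Doubled vowels give \emph{Waage}/\emph{(ich) wage} $\Rightarrow a=1$, \emph{Heer}/\emph{her} $\Rightarrow e=1$, and \emph{Boot}/\emph{bot} $\Rightarrow o=1$. Doubled consonants, including reformed or variant spellings, give \emph{das}/\emph{dass} $\Rightarrow s=1$, \emph{Mann}/\emph{man} $\Rightarrow n=1$, and \emph{Tipp}/\emph{Tip} or \emph{Stopp}/\emph{Stop} $\Rightarrow p=1$. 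With $h$ and these vowels trivial, every subsequent relation may be simplified by erasing all occurrences of an already-dead letter before cancelling.

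The remaining letters are reached through substitution relations fed into the cascade. Final devoicing (\emph{Auslautverh\"artung}) supplies identifications such as \emph{Rad}/\emph{Rat} $\Rightarrow d=t$ and \emph{seit}/\emph{seid} $\Rightarrow d=t$, which propagate triviality once one member of a voiced/voiceless pair is reached; \emph{fiel}/\emph{viel} gives $f=v$, and \emph{Saite}/\emph{Seite} gives $a=e$ as a consistency check. The peripheral letters $c,q,x,y,j$, which have no convenient native double, are then handled by loanword homophones and by chains that, after the trivial letters are deleted, isolate exactly one new generator.

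The main obstacle is precisely the scarcity of legitimate pairs. German orthography is far more phonemic than English or French, so silent letters and accidental homophones are comparatively rare; worse, consonant doubling normally signals a \emph{short} preceding vowel, which is why the tempting pair \emph{Ratte}/\emph{Rate} is useless — those are not homophones. The cheap doubling trick survives only in the lucky cases where both words carry a short vowel anyway, and for several letters no such window exists. I expect the stops $t,d,g,k$ (reachable so far only up to the relations $d=t$, $g=k$, $b=p$), the liquids $l,r$, the pair $f,v$, and the long vowels $i,u$ (whose length is marked by \emph{ie} or a silent $h$ rather than by doubling) to be the stubborn cases, each demanding a bespoke homophone or a longer detour through the already-trivial letters. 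Confirming, against \cite{Duden1}, that every pair invoked is a genuine homophone is where the actual labour of the argument sits.
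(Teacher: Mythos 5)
Your framework is the correct one, and it is in fact the same as the paper's: impose each homophone relation, cancel common prefixes and suffixes in the free group, kill letters outright via insertion/deletion pairs, and propagate through substitution pairs once one member is dead. Your opening batch is also sound: \emph{Wahl}/\emph{Wal}, \emph{Waage}/\emph{wage}, \emph{Heer}/\emph{her}, \emph{Boot}/\emph{bot}, \emph{das}/\emph{dass}, \emph{Mann}/\emph{man}, \emph{Tipp}/\emph{Tip} legitimately give $h=a=e=o=s=n=p=1$, and \emph{Rad}/\emph{Rat}, \emph{viel}/\emph{fiel} give $d=t$ and $f=v$. But there the proof stops. You never exhibit pairs trivializing $b$, $c$, $d$, $f$, $g$, $i$, $j$, $k$, $l$, $m$, $q$, $r$, $t$, $u$, $v$, $w$, $x$, $y$, $z$: you defer them explicitly as ``stubborn cases'' demanding ``a bespoke homophone,'' or as letters ``handled by loanword homophones and by chains'' that are never displayed. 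The letter $w$ is not mentioned at all, and the relations $g=k$ and $b=p$ that you invoke are themselves asserted without supporting word pairs. For this particular theorem that is not a routine verification one may leave to the reader; exhibiting a valid pair for every letter \emph{is} the entire content of the proof, and your own (correct) observation that consonant doubling normally forces a short preceding vowel is precisely the reason the remaining letters cannot be dispatched by the cheap tricks that worked for your first batch. As it stands, your argument only proves that $G$ is a quotient of a free group on roughly the nineteen letters you left open.

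For comparison, the paper's proof consists essentially of the bespoke devices you anticipated but did not supply: appending $t$ to neutralize doubled consonants (\emph{hallt}/\emph{Halt} for $l$, \emph{starrt}/\emph{Start} for $r$, \emph{schafft}/\emph{Schaft} for $f$, whence $v$ by \emph{viel}/\emph{fiel} and then $w$ by \emph{wage}/\emph{vage}); the spellings \emph{th} and \emph{dt} (\emph{Zithern}/\emph{zittern} kills $t$ because $h=1$, then \emph{Stadt}/\emph{statt} kills $d$, and \emph{hemmt}/\emph{Hemd} kills $m$); the \emph{ck} device (\emph{packt}/\emph{Pakt} kills $c$); the phonetic decompositions $z = ts$ and $x = chs$ (\emph{Kitz}/\emph{Kitts}, \emph{lax}/\emph{Lachs}); and loanwords for the genuinely scarce letters (\emph{Clip}/\emph{klipp} for $k$, \emph{ciao}/\emph{tschau} for $u$, \emph{beaten}/\emph{bieten} for $i$, \emph{Toy}/\emph{toi} for $y$, \emph{Y\"akchen}/\emph{J\"ackchen} for $j$, \emph{Clique}/\emph{klicke} for $q$). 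Your plan becomes a proof only after pairs of this kind are produced and checked against a standard pronunciation reference for every one of the letters you left open.
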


\smallskip\noindent
{\bf Proof:}
We successively eliminate letters using specific properties of spoken German.
For homophonicity we need to distinguish in particular between long and short vowels as well as between voiced and unvoiced consonants.

\medskip\noindent
{\bf Vowels} (methods of idempotents (cf.~\cite{QHGL}) and of vanishing with h).
\begin{enumerate}
\item [(a)] For instance, `aa', `ah' and `a' may often be pronounced alike, in particular they often have the same length, like ``Waage'' [scales] and ``wage'' [(I) dare] or ``Wahl'' [choice] and ``Wal'' [whale].
\item [(e)] Similarly, `ee', `eh' and sometimes `e' can sound the same:  ``Meer'' [the sea] and ``mehr'' [more].
\item [(o)] Both `oo' and `oh' are often used within words, and can both be pronounced like a single `o' (``Boot'' [boat] and ``bot'' [(he) offered], and ``hohle'' [hollow (pl.)] vs.~``hole'' [(I) fetch]). 
\end{enumerate}
We note that for `i' and `u'  the corresponding identifications do not work; e.g., while both `ie' and `ih' indicate a long `i', the former can never occur at the beginning of a word where it is instead replaced by the second one (``ihnen'' [(to) them], ``ihr'' [her]), and `ii' in a word (like ``liieren'' [(to) liaise]) is pronounced with a glottal stop between the `i's.
Similarly, the `uu' in words like ``Kontinuum'' or ``Trauung'' indeed comes across as two `u's,
 and there aren't any words with a `uu' that would sound like a long `u', say.  Hence we need to treat these two vowels separately.

\bigskip\noindent
{\bf Consonants}.
\begin{enumerate}
\item[(g/b/n)] (Voiceless in the end.) At the end of a word, a voiced consonant is pronounced in the same way as the corresponding unvoiced one (like ``Bug'' [(nautical) bow] and ``buk'' [(he) baked], or ``Alb'' and ``Alp'' [both for nightmare]). Similarly, an `nn' at times sounds like a single `n' (``Mann'' [man] and ``man'' [one/you (pronoun)]).

\item[(v/w)] (WVF?) The consonant `v' is typically pronounced in one of two ways: like `f' or like `w', depending mostly on the etymological origin of the word (``viel'' [many] vs. ``fiel'' [(he) fell] and ``vage'' [vague] vs. ``wage'' [(I) dare]).

\item[(l/r/f/p/s)] (Idempotents.) By combining certain consonants we can further minimise the influence of a single contributing consonant, so while it is hard to find the same sounds for `ll' and `l' at the end of a word, one can add a `t' to it and succeeds (``hallt'' [(it) reverberates/echoes] vs. ``Halt'' [halt]). Similar comments apply to `rr' and `r' (``starrt'' [(he) stares] vs. ``Start'' [start]), for `ff' and `f' (``schafft'' [(he) manages] vs. ``Schaft'' [shaft]) as well as `pp' and `p' (``klappst'' [(you) flap/fold] vs. `klapst' [(he) claps lightly]; alternatively, ``schnippst'' and ``schnipst'' (both from schnipsen [(to) clip])).  Furthermore, ``fasst'' [(he) catches] and ``fast'' [almost] are homophonic.
\item[(t/d)] (Little dt for tt.) A related case is the combination `th' which also often assures that a preceding vowel is pronounced as a short one: e.g. ``Zithern'' [zithers] and ``zittern'' [(to) tremble] are pronounced the same way; another means to the same end is the use of `dt' in place of `tt', giving e.g. that ``Stadt'' [city] and ``statt'' [instead of] are homophonic.
\item[(m)]
A variant of the idempotent method, using also the voiced/unvoiced consonant at the end of a word, is ``hemmt'' [hinders] vs. ``Hemd'' [shirt].
\item[(c)] (Departing of the c.) Other constructs that make sure that a vowel is short are to follow it up with a `ck' rather than a `k', and the words ``packt'' [(he) packs] and ``Pakt'' [(a) pact] sound alike. 
Note, however, that in a very similar setting the words ``hackt'' [(he) hacks] and ``hakt'' [(he) hooks] are pronounced differently, as the latter `a' then denotes a {\em long} vowel.
\item[(z)] A further peculiarity is the pronunciation of `z', typically equivalent to the combination `t-s' (with obvious exceptions for loanwords like ``Jazz'' where the educated citizen will make an attempt to sound more anglophonic), so we can identify the genitive ``Kitts'' of ``Kitt''  [glue] with ``Kitz'' [fawn].
\item[(x)] In the same vein as `z', the letter `x' is pronounced `k-s' which is also the pronunciation of `chs', i.e. when `ch' precedes `s' it often becomes `k'); so we find ``lax'' [lax] to be homophonic to ``Lachs'' [salmon].
\end{enumerate}

The remaining letters `k', `u', `i', `y', `j' and `q' are somewhat harder to trivialise, but modulo the above this is doable, albeit by using loanwords from different languages (English, Italian, Hungarian).

\begin{enumerate}
\item[(k)] The English word ``Clip'' for office equipment is often used  and is homophonic to ``klipp'' (e.g.~from ``klipp und klar'' [in no uncertain terms]).
\item[(u)] The Italian word ``ciao'' has been assimilated as ``tschau'', both terms being used.
\item[(i)] The word ``roien'' [(to) row] is homophonic to ``reuen'' [(to) rue], the former being used mainly in ``Niederdeutsch'', i.e.~in the North of Germany.  Alternatively, the loanword (from the English language) ``beaten'' [(to) make beat music] is acceptable according to \cite{Duden1}, and it is homophonic to ``bieten'' [(to) offer].
\item[(y)] The word ``toi'' from the saying ``toi, toi, toi'' [break a leg] sounds like ``Toy'' [sex toy]. Alternatively, a ``Bayer'' [Bavarian] can be spelled ``Baier''.
(We could also invoke the ambiguous spelling of `Yoghurt' and `Joghurt'. For yet another possibility, the Hungarian word ``Gulyas'' [goulash] has been assimilated also as ``Gulasch''.)
\item[(j)] As to `j', we use the word ``Yak'' [yak]  (from the Tibetan `gyag') and its similarity to ``Jacke'' [jacket] which are not homophonic as such, but their respective diminutives ``Y\"akchen'' and ``J\"ackchen" (note the ensuing umlaut for either case) are.
\item[(q)] Finally, for the quite rare letter `q', we can use the French word ``clique'' (which has been adapted into German with a short `i'), whose pronunciation agrees with that of ``klicke'' [(I) click]. Another possibility is to note that the letter ``Q'' itself can be used as a word (say, as the Q in a game of Scrabble) and is homophonic to ``Kuh'' [cow]).
\end{enumerate}

In the table below we successively eliminate the letters on the left using the homophonic ambiguity displayed on their right, completing the proof of the theorem.
\smallskip\noindent{\small
\halign{\hskip3truecm#&$#|$&\quad#\hfill\cr
a&&Waage --- wage\cr
h&&Wahl --- Wal\cr
e&&Meer --- mehr\cr
o&&Boot --- bot\cr
g&&Bug --- buk\cr
b&&Alb --- Alp\cr
n&&Mann --- man\cr
v&&viel --- fiel\cr
w&&wage --- vage\cr
l&&gewallt --- Gewalt\cr
r&&starrt --- Start\cr
f&&schafft --- Schaft\cr
p&&klappst --- klapst\cr
s&&fasst --- fast\cr     
t&&Zittern --- Zithern\cr
d&&Stadt --- statt\cr
m&&hemmt --- Hemd\cr
c&&packt --- Pakt\cr
z&&Kitz --- Kitts\cr
x&&lax --- Lachs\cr
k&&klipp --- Clip\cr
u&&tschau --- ciao\cr
i&&roien --- reuen\cr   
y&&Toy --- toi\cr   
j&&J\"ackchen --- Y\"akchen \cr
q&&Clique --- klicke. \qquad \qquad \hfil $\Box$\cr}}

\bigskip
{\bf Generalizations.} One can also try to include the umlaute \"a, \"o, \"u, and the ``sharp s'' \ss\ into these investigations. The result remains the same. 
Our suggestion for the corresponding trivializations are the following: for `\"a' we invoke that  in combination with `u' the diphthongs `\"au' and `eu' sound alike, for instance in the words ``h\"autig'' [of a skinny texture] and ``heutig'' [contemporarily]; alternatively, we can use that a long `\"a' can sound like  the  `ai' for certain loanwords from the English language, for example in ``F\"ahre'' [ferry] and ``faire'' [fair]; for `\"o' we use that certain words are spelled with both the original French `eu' and the assimilated German `\"o', like ``Fris\"or'' and ``Friseur''; furthermore, the pronunciation of `\"u' is often the same as that of `y', like in the Greek letter ``My'' [mu] and ``m\"uh'' [(I) labour], or, a far better one due to Martin Brandenburg, ``Mythen'' [myths] and ``m\"uhten'' [(they) laboured]. Finally, a `sharp s' at the end of a word is typically preceded by a long vowel, and hence it is not difficult to construct word pairs like ``a\ss'' [(I) ate] and ``Aas'' [(rotten) carcass].

\medskip
\halign{\hskip3truecm#&&\quad#\hfill\cr
\"a&&h\"autig --- heutig\cr                    
\"o&&Fris\"or --- Friseur\cr
\"u&& m\"uh -- My\cr                
\ss&&a\ss\  --- Aas.\cr}

\section{Korean}
\label{S:Korean}

What differentiates Korean from the languages discussed in \cite{QHGL} is the number of alphabets and some fundamental grammar structures. Nevertheless, there exist many rules regarding homophones, so the first natural assumption would be that the resulting quotient group shouldn't have too many elements. It turns out that this is indeed the case. 

Here we note that this mathematical analysis of Korean does not describe the entire structure of the Korean language. It takes the phonetic aspect of the language and restructures the alphabets into a free group with a very specific and somewhat restrictive equivalence relation. Using such a structure, we inevitably lose a lot of information about Korean language, but are, however, rewarded with a unique finite group that characterizes it.

Now, let us begin with describing some necessary concepts about the Korean language.

\subsection{Some Basics of Korean}

Korean characters, like English, consist of {\it vowels} and {\it consonants}. The alphabet contains 19 consonants and 21 vowels. The exact list is shown below in Table \ref{T:KoreanCharacters}.

\begin{table}[h!]
\centering
\begin{tabular}{|c|c|}\hline Consonants & ㄱㄲㄴㄷㄸㄹㅁㅂㅃㅅㅆㅇㅈㅉㅊㅋㅌㅍㅎ \\ \hline Vowels & ㅏㅐㅑㅐㅓㅔㅕㅖㅗㅘㅙㅚㅛㅜㅝㅞㅟㅠㅡㅢㅣ \\ \hline \end{tabular}
\caption{Korean characters \cite{KLI}.}
\label{T:KoreanCharacters}
\end{table}

Because of the complications arising from the unique structure of Korean, from here on, each of the above symbols in the table will be called {\it characters}. To show why such clarification is crucial, let us take a look at a Korean word that stands for ``number''. It is written as 수. This word is comprised of a single letter, and that letter is comprised of a consonant and a vowel, which are, in this case, ㅅ and ㅜ. These letters form the bases of Korean words, as no single consonant or vowel is ever used alone without the other. However, this is not the end.

To add to the already complex structure, a single letter can be made up of multiple consonants and a vowel, up to 3 consonants and one vowel. Denoting vowels and consonants as $v$ and $c$ in respective order, the possible combinations are, $\{c+v, c+v+c, c+v+c+c\}$. From here on forth, expressions of the form $c+v+\cdots$ will be called the ordered decomposition. The fact that these are the only combinations, however, effectively erases the need to distinguish between letters and combinations of characters. We present the needed argument below. 

\begin{theorem}
Ordered decomposition uniquely encodes any formal composition of Korean letters or words. Or equivalently, the formal expression of a Korean word is uniquely encoded in the ordered decomposition. 
\end{theorem}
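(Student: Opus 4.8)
The plan is to show that the map sending a Korean word to its ordered decomposition is injective, i.e.\ that from the raw string of characters $c$'s and $v$'s one can unambiguously reconstruct where each letter begins and ends. Since every letter has one of exactly three shapes, namely $c+v$, $c+v+c$, or $c+v+c+c$, the concatenation of the letters of a word produces a string over the two-symbol alphabet $\{c,v\}$ in which each maximal block corresponding to a letter contains \emph{exactly one} vowel. The key observation is therefore that the vowels act as anchors: in any admissible string the number of letters equals the number of vowels, and the vowel positions, together with the constraint that every letter starts with a consonant, force a unique segmentation.

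First I would set up the claim combinatorially. Let a word be written as a sequence $s_1 s_2 \cdots s_N$ where each $s_i \in \{c,v\}$ records only whether the $i$-th character is a consonant or a vowel (the actual identity of the character is irrelevant for \emph{where} the breaks fall). I would observe that each of the three letter-types matches the regular expression $c\,v\,c^{0,1,2}$, i.e.\ a single leading consonant, then the unique vowel, then zero, one, or two trailing consonants. Reading the string left to right, I would argue that a new letter must begin precisely at each position that carries a consonant immediately following the completion of the previous letter, and that the vowel of each letter is the \emph{first} vowel encountered after that starting consonant.

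The decisive step is the uniqueness of the segmentation, and this is where the only real content lies. I would prove it by induction on the number of vowels (equivalently, on the number of letters). The base case, a single letter, is immediate since there is nothing to segment. For the inductive step, consider the first vowel in the string, occurring at position $k$; every character before it must be a consonant, and since a letter admits at most one leading consonant before its vowel, we must have $k=2$, so the first letter occupies positions $1,2$ together with however many of the subsequent consonants belong to it. The ambiguity to rule out is exactly the allocation of the trailing consonants between the current letter and the leading consonant of the next one.

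The hard part is precisely this boundary disambiguation: a run of consonants sitting between two vowels could a priori be split in several ways between the trailing slot of the left letter and the obligatory leading slot of the right letter. I expect to resolve it by invoking the structural bound that a letter carries \emph{at most two} trailing consonants and \emph{exactly one} leading consonant, so that a consonant run of length $m$ between two vowels forces $m-1$ consonants onto the left letter and $1$ onto the right; since $m-1 \le 2$ is guaranteed by well-formedness, the split is forced and unique. Carrying this allocation through the induction then yields that the entire decomposition is determined, which is the claim; I would close by remarking that this is exactly what licenses identifying letters with their ordered decompositions and hence treating Korean words directly as strings over the character set in the free-group computation that follows.
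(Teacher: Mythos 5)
Your proof is correct and takes essentially the same route as the paper's: both argue by induction on the number of letters that the segmentation of the $c$/$v$ string is forced, because every letter begins with exactly one consonant followed by its unique vowel. The only cosmetic difference is that you peel off the \emph{first} letter and state the boundary-forcing step as a uniform $(m-1,1)$ split of each consonant run between consecutive vowels, whereas the paper appends the \emph{last} letter and checks the three shapes $v+c+v$, $v+c+v+c$, $v+c+v+c+c$ case by case.
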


\begin{proof}

\normalfont For the base case, a word comprised of one letter is always trivially identifiable in Korean because it will be one of the form $c+v$, $c+v+c$, or $c+v+c+c$. It is clear that none of the three forms can be divided up to form a word of two letters.

Now, suppose that a word comprised of n letters is uniquely identifiable. Now the final word in that letter can either end with a consonant or a vowel. If the word of n letters ended on a vowel, the possible $n+1^{th}$ letter after that vowel can be of the form $c+v$, $c+v+c$, or $c+v+c+c$. The three possibilities of this word with the $n+1^{th}$ letter is $c+\cdots +v+c+v$, $c+\cdots +v+c+v+c$, or $c+\cdots +v+c+v+c+c$. In any case, it is impossible to divide up $v+c+v$, $v+c+v+c$, or $v+c+v+c+c$ to end with a letter other than to divide between the first two $v$ and $c$. The same argument holds for the case when the $n^{th}$ letter ended on a consonant.
\end{proof}
 
Now that we've established some basics we will examine the homophonic structure of the quotient group $G$ of the free group on 40 Korean characters, given by the equivalence relation A=B whenever A and B have the same pronunciation in Korean. We will use a standard pronunciation guide such as \cite{KLI} for reference.

\subsection{Triviality of Consonants}

We first show that all consonants are trivial. We do this in three steps. 

\subsubsection{ ㅇ is trivial}

To show this, let us take a look at the word 안일하다 (to be idle). 안일하다 has exactly the same pronunciation as 아닐하다 \cite{KLI}. Just by looking at the two words, 하다 is present on both sides, so it can be canceled out. Now the equivalence relation is between ㅇㅏㄴㅇㅣㄹ and ㅇㅏㄴㅣㄹ. Clearly after canceling out, ㅇ=1, and hence ㅇ is trivial.

\subsubsection{ ㄱ=ㄲ=ㅋ, ㄷ=ㅅ=ㅆ=ㅈ=ㅊ=ㅌ, ㅂ=ㅍ}

To show the above equivalence relations, let us examine the words containing letters of the form $c+v+c$. First, we will examine the words 부엌 (kitchen), 밖 (outside). By the equivalence relation defined above, 부엌=부억 and 밖=박. By rewriting these relations in ordered decomposition, ㅂ+ㅜ+ㅇ+ㅓ+ㅋ=ㅂ+ㅜ+ㅇ+ㅓ+ㄱ and ㅂ+ㅏ+ㄲ=ㅂ+ㅏ+ㄱ. Now it is clear that ㄱ=ㅋ and ㄱ=ㄲ. By the transitive property ㅋ=ㄲ=ㄱ.

For the second part we can examine the equivalence relations, 낫 (scythe) =낟, 낮 (day) =낟, 낯 (face) =낟, 밭 (field) =받. Clearly ㄷ=ㅅ=ㅈ=ㅊ=ㅌ.  Proving ㄷ=ㅆ is a bit more difficult as there are no single lettered words in Korean ending in ㅆ. To prove this we need to look at a two lettered word 불소 (fluorine). By the equivalence relation 불소=불쏘, and clearly ㅅ=ㅆ. Since we already know ㅅ=ㄷ, by the transitive property, ㄷ= ㅆ, thus concluding our proof of the second equivalence relation. 

For the last equivalence relation, we can look at 짚 (hay) = 집, and can conclude that ㅂ = ㅍ.

By proving these relations, we've reduced the set of consonants into $\{$ㄱ,ㄴ,ㄷ,ㄸ,ㄹ,ㅁ,ㅂ,ㅃ,ㅉ,ㅎ$\}$.

\subsubsection{Consonants are trivial}

To further reduce the set of consonants let us look at the equivalence relation 앞마당 (lawn) = 암마당. This shows that ㅁ = ㅍ, and ㅍ = ㅂ so, ㅁ = ㅂ. Also 있는 (existing)  = 인는, and so ㄴ = ㅆ = ㄷ. Also, 국물 (soup)  = 궁물, and 놓는 (lay down)  = 논는, so ㄱ is trivial and ㄷ = ㄴ = ㅎ. Observe that 숱하다 (to be in abundence)  = 수타다, which shows that ㅎ is also trivial. Since ㄷ = ㅎ and ㅎ is trivial, ㄷ is also trivial. Now, there only remain five nontrivial consonants, $\{$ㄸ,ㄹ,ㅂ,ㅃ,ㅉ$\}$.

Let's look at the equivalence relation 웃다 (smile)  = 욷따, which in ordered decomposition is, ㅇ+ㅜ+ㅅ+ㄷ+ㅏ = ㅇ+ㅜ+ㄷ+ㄸ+ㅏ. We know ㅇ,ㅅ = ㄷ are trivial, so ㅜ+ㅏ = ㅜ+ㄸ+ㅏ. Hence ㄸ = 1 and so ㄸ is also trivial. 약지 (ring finger)  = 약찌, hence ㅉ = ㅈ = ㄷ = 1 and ㅉ is trivial. 막론 (whether)  = 망논, which can be rewritten as ㅁ+ㅏ+ㄱ+ㄹ+ㅗ+ㄴ = ㅁ+ㅏ+ㅇ+ㄴ+ㅗ+ㄴ, and since ㄱ,ㄴ = ㄷ are both trivial, ㅁ+ㅏ+ㄹ+ㅗ = ㅁ+ㅏ+ㅗ, and so ㄹ is trivial. 국밥 (soup and rice)  = 국빱 implies that ㅂ = ㅃ. There remains only one non-trivial consonant, ㅂ.

Lastly we need to examine a word with a letter of the form $c+v+c+c$. 넓다 (wide)  = 널따, and we know that ㄴ,ㄹ,ㄷ,ㄸ are all trivial. So, after canceling both sides, we have ㅓ+ㅂ+ㅏ = ㅓ+ㅏ, and so ㅂ is trivial. Hence we've proved the triviality of all Korean consonants. 

\subsection{Vowels have 2 Non-trivial Elements: Vowels =  $\{$ㅏ,ㅗ$\}$}

While the in examining consonants we only needed to look at a single equivalence relation, vowels are not so easy. There are multiple equivalence relations between 3 or more vowels, so we need to sort through these relations to see how they can be reduced. 

Let us begin examining the vowels by the equality 가지어 (have)  = 가져 = 가저. All consonants are trivial, so we can simply look at ㅏ+ㅣ+ㅓ = ㅏ+ㅕ = ㅏ+ㅓ. From this we can conclude that ㅣ is trivial and ㅓ = ㅕ. Also, 통계 (statistics)  = 통게 implies that ㅖ = ㅔ. 희미 (faint)  = 히미 implies ㅢ = ㅣ. Since ㅣ is trivial ㅢ is trivial. 금괴 (gold bar)  = 금궤 (metal box) implies ㅚ = ㅞ. Also, it is allowed that 위 is pronounced as 우+이, and so ㅟ = ㅜ.(누이다 = 뉘다) 되어 = 돼 implies ㅚ+ㅓ = ㅗ+ㅓ = ㅙ = ㅗ+ㅐ. 싸이다 = 쌔다 implies ㅏ+ㅣ = ㅏ = ㅐ. 트이다 = 틔다 = 티다 implies that ㅡ is also trivial. 미아 = 먀 implies ㅏ = ㅑ. 쏘이다 = 쐬다 implies ㅗ+ㅣ = ㅗ = ㅚ. Also, ㅚ = ㅞ, so ㅗ = ㅚ = ㅞ.

Listing these rules into an easily decipherable form we get:

%
%
%
%
%
%
%
%
%
%
%
%
%
%
%
%
%
%

\begin{multicols}{2}{
\begin{enumerate}
\item ㅣis trivial

\item ㅓ=ㅕ

\item ㅖ=ㅔ

\item ㅢ=ㅣ

\item ㅚ=ㅞ

\item ㅟ=ㅜ+ㅣ $\Leftrightarrow$ ㅟ=ㅜ

\item ㅚ+ㅓ=ㅙ

\item ㅏ+ㅣ=ㅐ $\Leftrightarrow$ ㅏ=ㅐ

\item ㅡ+ㅣ=ㅣ $\Leftrightarrow$ ㅡ=1

\item ㅣ+ㅏ=ㅑ $\Leftrightarrow$ ㅏ=ㅑ

\item ㅗ+ㅣ=ㅚ $\Leftrightarrow$ ㅗ=ㅚ

\item ㅢ=ㅔ $\Leftrightarrow$ ㅔ=1

\item ㅗ+ㅐ=ㅙ

\item ㅝ=ㅜ+ㅓ

\item ㅛ=ㅣ+ㅗ 

\item ㅠ=ㅣ+ㅜ 
\end{enumerate}}
\end{multicols}

\vspace{1pc}

Just by looking at these rules, we can reduce the the set $\{$ㅏㅐㅑㅐㅓㅔㅕㅖㅗㅘㅙㅚㅛㅜㅝㅞㅟㅠㅡㅢㅣ$\}$ of vowels into $\{$ㅏㅓㅗㅘㅙㅛㅜㅝㅞㅠ$\}$. 

Now we outline the rest of the process:

\begin{itemize}
\item
13) and 8) combine to show that ㅗ+ㅐ=ㅗ+ㅏ=ㅘ=ㅙ, implying ㅘ=ㅙ.
\item
5) and 11) combine to show that ㅗ=ㅞ=ㅜ+ㅔ, and since 12) stated that ㅔ is trivial, ㅗ=ㅜ.
\item
As a direct result of ㅗ=ㅜ, 14), 13), 11), 7) and 1), ㅝ=ㅜ+ㅓ=ㅗ+ㅓ=ㅗ+ㅣ+ㅓ=ㅚ+ㅓ=ㅙ=ㅗ+ㅐ. So ㅝ=ㅙ=ㅘ.
\item 
1) and 15) together shows that ㅗ=ㅛ.
\item
Since ㅠ=ㅣ+ㅜ and we've concluded that ㅗ=ㅜ, ㅠ=ㅣ+ㅜ=ㅣ+ㅗ=ㅗ.
\item
Recall that from 7) and 13), we have ㅚ+ㅓ=ㅙ=ㅗ+ㅐ. However, ㅚ=ㅗ+ㅣ=ㅗ, so ㅗ+ㅓ=ㅗ+ㅐ, and by cancelling ㅗ, we have that ㅓ=ㅐ.
\item
8) states that ㅏ=ㅐ, so with the above result, ㅏ=ㅓ. 
\item
Since ㅘ=ㅗ+ㅏ, ㅘ is generated by ㅗ and ㅏ.
\end{itemize}

\noindent Ultimately, after all iterations of the 16 rules, we are left with a free group that is generated by the free generating set $\{$ㅏ,ㅗ$\}$. 

%
%
%

\section{Turkish}
\label{S:Turkish}

In this section we determine the homophonic quotient group for Turkish. There are several Turkic languages and alphabets encoding them have many commonalities. We will exclusively focus on the Modern Turkish alphabet used in Turkey to encode Turkish. 

\subsection{The Sounds of Turkish}

The Modern Turkish alphabet was introduced in 1928 along with a wide-reaching literacy campaign. The Latin-based script was developed to replace the use of the Arabic script, and contains a total of 29 letters (8 vowels and 21 consonants) as seen in Table \ref{T:TurkishLetters}:

\begin{table}[h!]
\centering
\begin{tabular}{|c|c|}
\hline
Consonants & b \; c \; \c{c} \; d \; f \; g \; \u{g} \; h \; j \; k \; l \; m \; n \; p \; r \; s \; \c{s} \; t \; v \; y \;z\\\hline
Vowels& a \; e \; {\i} \; i \; o \; \"{o} \; u \; \"{u} \\
\hline
\end{tabular}
\caption{Letters of the Modern Turkish script (only lowercase letters are given).}
\label{T:TurkishLetters}
\end{table}

This set of letters was specifically selected to represent the sounds present in the spoken language of the time, taking the Istanbul dialect as the standard. Each letter is supposed to represent a unique sound of the spoken language (except the so-called {\it soft g}, {\it \u{g}}, which tends to extend the vowel before it and blends it to a following vowel if there is one, but is otherwise completely silent; see \cite{Fuchs} for more on the soft g). For more on the sound system of Modern Turkish, see \cite{Yavuz}. 

To this day the Modern Turkish script retains most of its phonetical representativeness \cite{KY}. Indeed many hold that there are no homophones in Turkish; see for instance \cite{RW} where Turkish is described as a ``completely transparent writing system" with ``invariant and context-independent one-to-one mappings between orthography and phonology". 

This suggests that the free group generated by the 29 sound representatives will not shrink  much if at all when we try introducing homophonic equivalences. Nonetheless there are indeed some relations we might use if we consider ``how words are actually pronounced by real live people".\footnote{In his MathSciNet review (MR1273406 (95e:00027)), James Wiegold notes that the authors of \cite{QHGL} ``have [perhaps deliberately?] neglected all considerations of how words are actually pronounced by real live people.'' Clearly if we were to take into consideration each native speaker's distinct pronunciation patterns, the homophonic quotients problem would become quite intractable. However we will indeed introduce some of this complication into our analysis of Turkish. This may be justified by the fact that there is deemed to be a standard spoken Turkish, and it is indeed distinct from most formal descriptions of the orthography / phonology correspondence for the language.}


\subsection{The soft g disappears}

As noted above the soft g is often not a distinctly pronounced consonant but instead helps to accentuate or blend the surrounding vowels. Most native speakers would agree that we can identify the following encodings of the male name meaning ``Khan":
\begin{center}
 Kaan = Ka\u{g}an.
\end{center}
Thus in the quotient group we would identify the soft g with identity. 

\subsection{Other disappearance acts: h and t}
The standard pronunciation of the word ``dershane" (classroom) overlaps with the pronunciation of ``dersane", thus allowing us to conclude that $h$ too is trivial in the quotient. Similarly the double t's in the words ``Hacettepe" and ``An{\i}ttepe" (two location names in Ankara) are most commonly pronounced as if they were written as ``Hacetepe" and ``An{\i}tepe" respectively. Thus we can identify {\it tt} with {\it t}, trivializing {\it t}.

\subsection{Vowel Confusion: the transformations of {\it a} and {\it e} into {\it \i} and {\it i} and two final disappearance acts}

The Turkish language captures the phrase ``let me look" in the single word ``bakay{\i}m". The native speaker pronounces the latter in the same way that she would read the letter collection ``bak{\i}y{\i}m". This allows us to identify $a = \i$. 
Similarly the phrase ``i\c{c}ecek" (drink) is pronounced the same way that one would read ``i\c{c}icek" and so we identify $e = i$. 

Finally the word ``a\u{g}abey" for older brother has an almost universally accepted informal spelling, ``abi", representing the way people actually pronounce the word. This gives us two  additional trivializations, of $a$ and $y$. 

Putting the above reductions together we conclude that the homophonic quotient group for Turkish is a free group on 22 generators:
\begin{center}
b \; c \; \c{c} \; d \; e (= i) \; f \; g \; j \; k \; l \; m \; n \; o \; \"{o} \; p \; r \; s \; \c{s} \; u \; \"{u} \; v \;z
\end{center}

\section{Final Words: Bringing the three threads together}

\parskip=3pt

In this paper we investigated three different languages and their writing systems. We believe that our results offer a neat example of applied algebra. Algebraic structures have been identified in various symmetrical constructions of nature such as crystals as well as in a range of sociological and anthropological contexts such as the kinship structure of the Warlpiri of Australia.\footnote{As ethnomathematician Marcia Ascher describes in detail in her book, the kinship structure of the Warlpiri, an indigeneous people in Australia, can be accurately and succinctly represented by the dihedral group of order $8$. See Chapter 3 of \cite{Ascher} for details.} In this note, we explored how the writing system of a modern language and its correspondence with the sounds of that language can be encoded in group theory. 

It is important to note that our methods do not address the full phonetic structure of any single language. Our work only pertains to the relationship between orthography and phonology of a language, that is, the extent to which a single symbol may represent a multiplicity of sounds of a given language. A simplistic interpretation of our method would suggest that if the generating set for the resulting quotient group is small, there are, on average, more sounds represented by a single symbol. 

We should also note that the complexity of the resulting group may be correlated not directly with the complexity of the sound system of a given language but perhaps more with the maturity of the particular writing system associated to it. Languages evolve, and oral traditions evolve much faster than written ones. Thus a young script like Modern Turkish might be naturally more representative of the phonetical structure of the language and equivalently offer fewer homophones than a script which is more mature, such as the Korean one, which in turn may offer fewer homophones than an even older script such as the German one.

\smallskip

\noindent {\bf Acknowledgment.}
The authors thank the reviewer for helpful suggestions.

\end{document}